\DeclareMathOperator{\Con}{Con}
\DeclareMathOperator{\Tol}{Tol}
\newtheorem{theorem}{Theorem}
\newtheorem{lemma}[theorem]{Lemma}
\newtheorem{proposition}[theorem]{Proposition}
\newtheorem{remark}[theorem]{Remark}
\newtheorem{example}[theorem]{Example}
\newtheorem{corollary}[theorem]{Corollary}
\title{Tolerances on posets}
\author{Ivan~Chajda and Helmut~L\"anger}
\date{}
\begin{document}

\footnotetext{Support of the research of both authors by the Austrian Science Fund (FWF), project I~4579-N, and the Czech Science Foundation (GA\v CR), project 20-09869L, entitled ``The many facets of orthomodularity'', as well as by \"OAD, project CZ~02/2019, entitled ``Function algebras and ordered structures related to logic and data fusion'', and, concerning the first author, by IGA, project P\v rF~2021~030, is gratefully acknowledged.}

\maketitle

\begin{abstract}
The concept of a tolerance relation, shortly called tolerance, was studied on various algebras since the seventieth of the twentieth century by B.~Zelinka and the first author (see e.g.\ \cite{CZ} and the monograph \cite{Ch} and the references therein). Since tolerances need not be transitive, their blocks may overlap and hence in general the set of all blocks of a tolerance cannot be converted into a quotient algebra in the same way as in the case of congruences. However, G.~Cz\'edli (\cite{CZ}) showed that lattices can be factorized by means of tolerances in a natural way, and J.~Grygiel and S.~Radelecki (\cite{GR}) proved some variant of an Isomorphism Theorem for tolerances on lattices. The aim of the present paper is to extend the concept of a tolerance on a lattice to posets in such a way that results similar to those obtained for tolerances on lattices can be derived.
\end{abstract}

{\bf AMS Subject Classification:} 08A02, 08A05, 06A06, 06A11

{\bf Keywords:} Poset, tolerance relation, congruence on a poset, block, directed, convex, relatively complemented poset, quotient poset by a tolerance, Isomorphism Theorem

Tolerance relation play an important role both in algebra and in applications. For corresponding results, examples and references see the monograph \cite{Ch}. Tolerances were studied by several authors within the last decades. Up to now, tolerances were treated on various algebras, in particular on lattices, but not on posets. The aim of this paper is to extend this investigation also to posets.

Let $\mathbf L=(L,\vee,\wedge)$ be a lattice. A {\em tolerance} on $\mathbf L$ is a reflexive and symmetric binary relation $T$ on $L$ satisfying the following condition:
\begin{itemize}
\item If $(x,y),(z,u)\in T$ then $(x\vee z,y\vee u),(x\wedge z,y\wedge u)\in T$.
\end{itemize}
The congruences on $\mathbf L$ are exactly the transitive tolerances on $\mathbf L$.

G.~Cz\'edli (\cite{Cz}) showed that for every lattice $\mathbf L=(L,\vee,\wedge)$ and each tolerance $T$ on $\mathbf L$, the set $L/T$ of all blocks of $T$ forms a lattice again, the so-called quotient lattice of $\mathbf L$ by $T$. His famous result in this paper is that every lattice can be embedded into the quotient lattice of a distributive lattice by a suitable tolerance.

Unfortunately, quotients by tolerances cannot be introduced in a similar way even for semilattices since the join of two blocks of a tolerance on a semilattice need not exist, see e.g.\ \cite{CCH}.

Now there arises the question if results similar to those by G.~Cz\'edli can be obtained also for posets. In other words, we ask if tolerances on posets can be defined in such a way that
\begin{enumerate}[(i)]
\item in case of lattices this concept coincides with that for lattices as introduced above,
\item the set of all blocks of a tolerance forms a poset again.
\end{enumerate}
In accordance with \cite{CZ}, every block of a non-trivial tolerance should be convex and directed.

It was shown in \cite{CNZ} that every tolerance on a relatively complemented lattice is a congruence. The corresponding result for posets should hold for our new concept.

Let us note that results on tolerances on algebras can be found in the monograph \cite{Ch}, see also \cite{CCH}, \cite{CCHL} and \cite{GR}. It is worth noticing that a similar attempt for defining congruences on posets was introduced and treated in \cite{CL}.

Now let $\mathbf P=(P,\leq)$ be a poset. A {\em tolerance} on $\mathbf P$ is a reflexive and symmetric binary relation $T$ on $P$ satisfying the following conditions:
\begin{enumerate}[(1)]
\item If $(x,y),(z,u)\in T$ and $x\vee z$ and $y\vee u$ exist then $(x\vee z,y\vee u)\in T$,
\item if $(x,y),(z,u)\in T$ and $x\wedge z$ and $y\wedge u$ exist then $(x\wedge z,y\wedge u)\in T$,
\item if $x,y,z\in P$ and $(x,y),(y,z)\in T\neq P^2$ then there exist $u,v\in P$ with $u\leq x,y,z\leq v$ and $(u,y),(y,v)\in T$,
\item if $(x,y)\in T\neq P^2$ then there exists some $(z,u)\in T$ with both $z\leq x,y\leq u$ and $(v,z),(v,u)\in T$ for all $v\in P$ with $(v,x),(v,y)\in T$.
\end{enumerate}
Conditions (3) and (4) are quite natural since they are satisfied by every tolerance on a lattice. In condition (3) one can take $u:=x\wedge y\wedge z$ and $v:=x\vee y\vee z$, and in condition (4) one can take $z:=x\wedge y$ and $u:=x\vee y$.

Let $\Tol\mathbf P$ denote the set of all tolerances on $\mathbf P$. Obviously, $\bigcup\limits_{x\in P}\{x\}^2$ is the smallest tolerance on $\mathbf P$ and $P^2$ the greatest one. These {\em tolerances} are called the {\em trivial} ones. A {\em block} of a tolerance $T$ on $\mathbf P$ is a maximal subset $B$ of $P$ satisfying $B^2\subseteq T$. Let $P/T$ denote the set of all blocks of $T$. Clearly, $T=\bigcup\limits_{B\in P/T}B^2$. A {\em congruence} on $\mathbf P$ is a transitive tolerance on $\mathbf P$. Let $\Con\mathbf P$ denote the set of all congruences on $\mathbf P$. Obviously, $\bigcup\limits_{x\in P}\{x\}^2$ is the smallest congruence on $\mathbf P$ and $P^2$ the greatest one.

Let $A\subseteq P$. Then $A$ is called {\em directed} if for every $x,y\in A$ there exist $z,u\in A$ with $z\leq x,y\leq u$. Further, $A$ is called {\em convex} if for all $x,y\in A$ with $x\leq y$ we have $[x,y]\subseteq A$. If $\mathbf P$ has a bottom element $a$ and a top element $b$ then $\mathbf P$ is called {\em complemented} if for every $x\in P$ there exists some $y\in P$ satisfying $x\vee y=b$ and $x\wedge y=a$. Every such element $y$ is called a {\em complement} of $x$. The poset $\mathbf P$ is called {\em relatively complemented} if for all $x,y\in P$ with $x\leq y$ the poset $([x,y],\leq)$ is complemented. 

At first, we investigate intervals in blocks of tolerances.

\begin{proposition}\label{prop1}
Let $\mathbf P=(P,\leq)$ be a poset, $T\in\Tol\mathbf P$ and $a,b\in P$ with $a\leq b$. Then the following assertions hold:
\begin{enumerate}[{\rm(i)}]
\item If $(a,b)\in T$ then $[a,b]^2\subseteq T$,
\item if $B\in P/T$ has bottom element $a$ and top element $b$ then $B=[a,b]$.
\end{enumerate}
\end{proposition}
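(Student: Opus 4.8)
The plan is to establish (i) by a short computation using only reflexivity, symmetry, and conditions (1) and (2), and then to derive (ii) from (i) by exploiting the maximality that defines a block.

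For (i), I would fix $x,y\in[a,b]$, so that $a\leq x\leq b$ and $a\leq y\leq b$, and aim to show $(x,y)\in T$. By reflexivity $(x,x),(y,y)\in T$. Applying (1) to the pairs $(a,b)$ and $(x,x)$ — here $a\vee x=x$ and $b\vee x=b$ both exist — yields $(x,b)\in T$, and the same step with $y$ in place of $x$ yields $(y,b)\in T$. Now apply (2) to the pairs $(b,x)$ and $(y,b)$, which belong to $T$ by symmetry; since $b\wedge y=y$ and $x\wedge b=x$ both exist, this gives $(y,x)\in T$, hence $(x,y)\in T$ by symmetry. As $x,y\in[a,b]$ were arbitrary, $[a,b]^2\subseteq T$.

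For (ii), note first that since $a$ is the bottom element and $b$ the top element of $B$, every element of $B$ lies between $a$ and $b$, so $B\subseteq[a,b]$. Moreover $a,b\in B$ and $B^2\subseteq T$, hence $(a,b)\in T$, and part (i) then gives $[a,b]^2\subseteq T$. Thus $[a,b]$ is a subset of $P$ whose square is contained in $T$ and which contains the block $B$; maximality of $B$ forces $[a,b]=B$. The only genuine obstacle is the bookkeeping in (i): one must choose the instances of (1) and (2) and the uses of symmetry so that every join and meet that appears is one of $a,b,x,y$ and therefore certainly exists (the analogous fact for lattices needs no such care). Note that conditions (3) and (4) play no role in this proposition; (ii) then follows at once.
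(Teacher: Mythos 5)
Your proof is correct and follows essentially the same route as the paper's: part (i) is a short computation with conditions (1) and (2) applied to comparable elements (you join up to $b$ first and then meet, while the paper meets down to $a$ first and then joins — a dual but equivalent bookkeeping), and part (ii) is the same maximality argument. Your observation that (3) and (4) are not needed is accurate; the paper's initial reduction to $T\neq P^2$ is in fact superfluous here for the same reason.
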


\begin{proof}
The case $T=P^2$ is trivial. Hence assume $T\neq P^2$.
\begin{enumerate}[(i)]
\item Assume $(a,b)\in T$ and let $c,d\in[a,b]$. Now
\begin{align*}
(a,b),(d,d)\in T & \text{ implies }(a,d)=(a\wedge d,b\wedge d)\in T, \\
(b,a),(c,c)\in T & \text{ implies }(c,a)=(b\wedge c,a\wedge c)\in T.
\end{align*}
Hence $(a,d),(c,a)\in T$ which implies $(c,d)=(a\vee c,d\vee a)\in T$ showing $[a,b]^2\subseteq T$.
\item Assume $B\in P/T$ to have bottom element $a$ and top element $b$. Then $B\subseteq[a,b]$. If $B\neq[a,b]$ then $B$ would not be a maximal subset $C$ of $P$ satisfying $C^2\subseteq T$. Therefore $B=[a,b]$ and hence $B$ is convex.
\end{enumerate}
\end{proof}

Using the previous result, we can show that blocks of non-trivial tolerances on posets share the essential properties of blocks of tolerances on lattices.

\begin{theorem}\label{th1}
Every block of a non-trivial tolerance on a poset is directed and convex.
\end{theorem}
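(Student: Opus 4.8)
The plan is to treat directedness and convexity one after the other, in each case enlarging the given block to a set whose square still lies in $T$ and then invoking maximality. Fix a poset $\mathbf P=(P,\le)$, a non-trivial tolerance $T$ on $\mathbf P$ (so, in particular, $T\neq P^2$, which is what makes conditions (3) and (4) usable) and a block $B\in P/T$.

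For directedness, let $x,y\in B$. Then $(x,y)\in B^2\subseteq T$, so condition (4) supplies a pair $(z,u)\in T$ with $z\le x,y$ and $x,y\le u$, enjoying the property that $(v,z),(v,u)\in T$ whenever $(v,x),(v,y)\in T$. The key observation is that \emph{every} $b\in B$ qualifies as such a $v$: since $b,x,y\in B$ we have $(b,x),(b,y)\in B^2\subseteq T$, hence $(b,z),(b,u)\in T$, and by symmetry $(z,b),(u,b)\in T$ as well. Together with reflexivity and $(z,u)\in T$ this yields $(B\cup\{z,u\})^2\subseteq T$, so maximality of $B$ forces $z,u\in B$. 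Thus $z$ is a common lower bound and $u$ a common upper bound of $x,y$ inside $B$, proving $B$ directed.

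For convexity, let $x,y\in B$ with $x\le c\le y$; I want to show $c\in B$. Fix an arbitrary $b\in B$. Applying the directedness just established to the pairs $b,x\in B$ and $b,y\in B$ produces $z\in B$ with $z\le b,x$ and $u\in B$ with $b,y\le u$. Then $z\le x\le c\le y\le u$ and $z\le b\le u$, while $(z,u)\in B^2\subseteq T$ with $z\le u$; so Proposition~\ref{prop1}(i) gives $[z,u]^2\subseteq T$. Since both $b$ and $c$ lie in $[z,u]$, we obtain $(b,c)\in T$, hence also $(c,b)\in T$. As $b\in B$ was arbitrary and $(c,c)\in T$ by reflexivity, we conclude $(B\cup\{c\})^2\subseteq T$, so $c\in B$ by maximality; therefore $[x,y]\subseteq B$.

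The one genuinely delicate step is deducing $(b,c)\in T$ in the convexity argument: the pairs $(b,x),(b,y)$ lie in $T$, but $T$ need not be transitive and neither $b\vee c$ nor $b\wedge c$ need exist, so conditions (1) and (2) are not directly applicable. The remedy is exactly to prove directedness first, use it to squeeze both $b$ and $c$ into a single interval $[z,u]$ with $(z,u)\in T$, and then appeal to Proposition~\ref{prop1}(i), which guarantees that such an interval is a single ``$T$-clique''. Everything else is routine bookkeeping with reflexivity, symmetry, and the maximality that defines blocks.
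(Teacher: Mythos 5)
Your proof is correct and follows essentially the same route as the paper: condition (4) plus maximality gives directedness, and then directedness together with Proposition~\ref{prop1}(i) applied to a suitable interval $[z,u]$ gives convexity. The only cosmetic difference is that you adjoin $z$ and $u$ to $B$ simultaneously (using $(z,u)\in T$ from (4)) where the paper adjoins them one at a time; both are fine.
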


\begin{proof}
Let $\mathbf P=(P,\leq)$ be a poset, $T$ a non-trivial tolerance on $\mathbf P$ and $B\in P/T$. We first prove that $B$ is directed. Let $a,b\in B$. Then $(a,b)\in T$. According to (4) there exists some $(c,d)\in T$ with both $c\leq a,b\leq d$ and $(x,c),(x,d)\in T$ for all $x\in P$ with $(x,a),(x,b)\in T$. Now let $e\in B$. Since $(e,a),(e,b)\in T$ we have $(e,c),(e,d)\in T$. This shows $(B\cup\{c\})^2\cup(B\cup\{d\})^2\subseteq T$. If $c\notin B$ then $B$ would not be a maximal subset $C$ of $P$ satisfying $C^2\subseteq T$. Hence $c\in B$. Analogously, we obtain $d\in B$. This shows that $B$ is directed. Now let $f,g\in B$ with $f\leq g$ and $h\in[f,g]$. Further let $i\in B$. Since $B$ is directed there exist $j,k\in B$ with $j\leq f,i$ and $g,i\leq k$. Now we have $j\leq f\leq h\leq g\leq k$, $j\leq i\leq k$ and $(j,k)\in T$. According to Proposition~\ref{prop1} (i), $(h,i)\in[j,k]^2\subseteq T$. This means that $(h,x)\in T$ for all $x\in B$ and hence $(B\cup\{h\})^2\subseteq T$. If $h\notin B$ then $B$ would not be a maximal subset $C$ of $P$ satisfying $C^2\subseteq T$. Hence $h\in B$ showing the convexity of $B$.
\end{proof}

A poset $\mathbf P=(P,\leq)$ is said to satisfy the {\em Ascending Chain Condition {\rm(ACC)} if there do not exist infinite ascending chains in $\mathbf P$, and it is said to satisfy the {\em Descending Chain Condition {\rm(DCC)} if there do not exist infinite descending chains in $\mathbf P$. In particular, every finite poset satisfies both conditions (ACC) and (DCC).

\begin{corollary}
Every block of a non-trivial tolerance on a poset satisfying the {\rm(ACC)} and the {\rm(DCC)}, especially every block of a non-trivial tolerance on a finite poset is an interval of the form $[a,b]$ with $a\leq b$.
\end{corollary}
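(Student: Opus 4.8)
The plan is to combine Theorem~\ref{th1} with Proposition~\ref{prop1}(ii). By Theorem~\ref{th1} every block $B$ of a non-trivial tolerance $T$ on $\mathbf P$ is directed and convex, so it will suffice to show that, under the (ACC) and the (DCC), such a block $B$ possesses both a bottom element $a$ and a top element $b$; then Proposition~\ref{prop1}(ii) gives $B=[a,b]$ at once, and since $a\in B\subseteq P$ with $a\leq b$ this is an interval of the required form.

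First I would note that $B\neq\emptyset$: the empty set cannot be a maximal subset $C$ of $P$ with $C^2\subseteq T$, since it is properly contained in any singleton $\{x\}$, and $\{x\}^2\subseteq T$ by reflexivity of $T$. Next, using the (DCC), I would produce a minimal element of $B$: starting from an arbitrary element of $B$ and, as long as the current element is not minimal in $B$, replacing it by a strictly smaller element of $B$, one obtains a descending chain in $\mathbf P$; by the (DCC) this process must terminate, and it can only terminate at a minimal element $a\in B$. Now directedness of $B$ (Theorem~\ref{th1}) turns $a$ into the bottom element: for any $x\in B$ there are $z,u\in B$ with $z\leq a,x$ and $a,x\leq u$, and minimality of $a$ forces $z=a$, hence $a\leq x$. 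Dually, the (ACC) yields a maximal element $b\in B$, and directedness again upgrades it to the top element of $B$.

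Finally, applying Proposition~\ref{prop1}(ii) to the block $B$ with bottom element $a$ and top element $b$ gives $B=[a,b]$, as desired. The ``especially'' part is then immediate, because every finite poset satisfies both the (ACC) and the (DCC).

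I do not anticipate a real obstacle in this argument; the only point needing a little care is the step from the chain conditions as literally stated (absence of infinite ascending/descending chains) to the existence of maximal/minimal elements of $B$, which is the routine (dependent-choice) extraction sketched above, combined with the observation that directedness is precisely what converts such an extremal element into an absolute bound for $B$.
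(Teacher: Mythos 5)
Your proof is correct and is exactly the routine argument the paper leaves implicit: Theorem~\ref{th1} gives directedness and convexity, the (DCC)/(ACC) together with directedness yield a bottom element $a$ and a top element $b$ of the block, and Proposition~\ref{prop1}(ii) then gives $B=[a,b]$. No gaps; the nonemptiness remark and the upgrade from minimal/maximal to bottom/top via directedness are precisely the points that need (and receive) attention.
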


The following two examples show that the intersection of two tolerances need not be a tolerance and that the posets $(\Tol\mathbf P,\subseteq)$ and \\
$(\Con\mathbf P,\subseteq)$ need not be lattices.

\begin{example}
Consider the poset $\mathbf P$ depicted in Figure~1:

\vspace*{-2mm}

\begin{center}
\setlength{\unitlength}{7mm}
\begin{picture}(4,6)
\put(2,1){\circle*{.3}}
\put(1,2){\circle*{.3}}
\put(3,2){\circle*{.3}}
\put(1,4){\circle*{.3}}
\put(3,4){\circle*{.3}}
\put(2,5){\circle*{.3}}
\put(2,1){\line(-1,1)1}
\put(2,1){\line(1,1)1}
\put(1,2){\line(0,1)2}
\put(1,2){\line(1,1)2}
\put(3,2){\line(-1,1)2}
\put(3,2){\line(0,1)2}
\put(2,5){\line(-1,-1)1}
\put(2,5){\line(1,-1)1}
\put(1.85,.25){$0$}
\put(.3,1.85){$a$}
\put(3.4,1.85){$b$}
\put(.3,3.85){$c$}
\put(3.4,3.85){$d$}
\put(1.85,5.4){$1$}
\put(1.2,-.75){{\rm Fig.~1}}
\end{picture}
\end{center}

\vspace*{2mm}

If
\begin{align*}
T_1 & :=\{0,a,b,c\}^2\cup\{b,c,d,1\}^2, \\
T_2 & :=\{0,a,b,d\}^2\cup\{a,c,d,1\}^2
\end{align*}
then $T_1,T_2\in\Tol\mathbf P$ are not congruences on $\mathbf P$ and
\[
T_1\cap T_2=\{0,a,b\}^2\cup\{a,c\}^2\cup\{b,d\}^2\cup\{c,d,1\}^2,
\]
but $T_1\cap T_2\notin\Tol\mathbf P$ since $\{0,a,b\}$ and $\{c,d,1\}$ are not directed.
\end{example}

\begin{example}
Consider the poset $\mathbf P$ visualized in Figure~2:
\begin{center}
\setlength{\unitlength}{7mm}
\begin{picture}(4,4)
\put(2,1){\circle*{.3}}
\put(1,2){\circle*{.3}}
\put(3,2){\circle*{.3}}
\put(1,4){\circle*{.3}}
\put(3,4){\circle*{.3}}
\put(2,1){\line(-1,1)1}
\put(2,1){\line(1,1)1}
\put(1,2){\line(0,1)2}
\put(1,2){\line(1,1)2}
\put(3,2){\line(-1,1)2}
\put(3,2){\line(0,1)2}
\put(1.85,.25){$0$}
\put(.3,1.85){$a$}
\put(3.4,1.85){$b$}
\put(.3,3.85){$c$}
\put(3.4,3.85){$d$}
\put(1.2,-.75){{\rm Fig.~2}}
\end{picture}
\end{center}

\vspace*{2mm}

If
\begin{align*}
T_1 & :=\{0,a\}^2\cup\{b\}^2\cup\{c\}^2\cup\{d\}^2, \\
T_2 & :=\{0,b\}^2\cup\{a\}^2\cup\{c\}^2\cup\{d\}^2, \\
T_3 & :=\{0,a,b,c\}^2\cup\{d\}^2, \\
T_4 & :=\{0,a,b,d\}^2\cup\{c\}^2
\end{align*}
then $T_1,T_2,T_3,T_4\in\Tol\mathbf P$ and $T_3$ and $T_4$ are minimal upper bounds of $\{T_1,T_2\}$ in $(\Tol\mathbf P,\subseteq)$ and hence $T_1\vee T_2$ does not exist, i.e.\ $(\Tol\mathbf P,\subseteq)$ is not a lattice. Observe that $T_1,\ldots,T_4\in\Con\mathbf P$. Hence, also $(\Con\mathbf P,\subseteq)$ need not be a lattice.
\end{example}

As shown by Theorem~\ref{th1}, non-trivial blocks of tolerances on posets have similar properties as those on lattices. Hence our next task is to show when the set $P/T$ of all blocks of a tolerance $T$ on a poset $\mathbf P$ is again a poset. In other words, we ask if there can be introduced a partial order relation on the set $P/T$ which in the case that $\mathbf P$ is a lattice coincides with the partial order relation induced by the lattice $(P/T,\vee,\wedge)$ introduced by G.~Cz\'edli (\cite{Cz}). For this, we need the following.

Let $\mathbf P=(P,\leq)$ be a poset, $T\in\Tol\mathbf P$ and $B_1,B_2\in P/T$. We define
\begin{align*}
B_1\sqsubseteq B_2 & \text{ if for every }b_1\in B_1\text{ there exists some }b_2'\in B_2\text{ with }b_1\leq b_2'\text{ and} \\
                   & \hspace*{5mm} \text{for every }b_2\in B_2\text{ there exists some }b_1'\in B_1\text{ with }b_1'\leq b_2.
\end{align*}

In the proof of Theorem~\ref{th3} we will use the following famous theorem by G.~Cz\'edli showing that for any lattice $\mathbf L$ and any $T\in\Tol\mathbf L$ the set $L/T$ forms again a lattice in some natural way.

\begin{theorem}\label{th2}
{\rm(}cf.\ {\rm\cite{Cz})} Let $\mathbf L=(L,\vee,\wedge)$ be a lattice, $T\in\Tol\mathbf L$ and $B_1,B_2\in L/T$. Then there exist unique $B_3,B_4\in L/T$ such that $b_1\vee b_2\in B_3$ and $b_1\wedge b_2\in B_4$ for all $b_1\in B_1$ and all $b_2\in B_2$. Put $B_1\vee B_2:=B_3$ and $B_1\wedge B_2:=B_4$. Then $(L/T,\vee,\wedge)$ is again a lattice.
\end{theorem}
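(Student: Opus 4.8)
The plan is to follow G.~Czédli's original argument from \cite{Cz}. I would start with two preliminary observations. First, every block $B$ of $T$ is a convex sublattice of $\mathbf L$: convexity is precisely Theorem~\ref{th1} (the cases where $T$ is a trivial tolerance being immediate), and closure under $\vee$ and $\wedge$ follows from maximality of blocks --- if $b,b'\in B$, then for every $c\in B$ compatibility turns $(b,c),(b',c)\in T$ into $(b\vee b',c),(b\wedge b',c)\in T$, so $(B\cup\{b\vee b'\})^2\subseteq T$ and $(B\cup\{b\wedge b'\})^2\subseteq T$, whence $b\vee b',b\wedge b'\in B$. I will also use Proposition~\ref{prop1}(i) freely. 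Second, \emph{existence} of $B_3$ is easy: put $M:=\{b_1\vee b_2 : b_1\in B_1, b_2\in B_2\}$; if $b_1,b_1'\in B_1$ and $b_2,b_2'\in B_2$, then $(b_1,b_1'),(b_2,b_2')\in T$, so compatibility gives $(b_1\vee b_2,b_1'\vee b_2')\in T$; hence $M^2\subseteq T$, and by Zorn's Lemma $M$ is contained in some block $B_3$. The order-dual construction with $M':=\{b_1\wedge b_2 : b_1\in B_1, b_2\in B_2\}$ produces $B_4$.

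The main obstacle is the \emph{uniqueness} of $B_3$ (and, dually, $B_4$). My plan is to show that $B_3$ is forced to be the convex sublattice $\langle M\rangle$ generated by $M$. Half of this is free: any block containing $M$ is a convex sublattice, hence contains $\langle M\rangle$; in particular $\langle M\rangle\subseteq B_3$, so $\langle M\rangle^2\subseteq T$. Thus uniqueness will follow once $\langle M\rangle$ is known to be maximal among subsets of $L$ whose square lies in $T$ --- i.e.\ to be a block --- since then any block $B$ with $M\subseteq B$ satisfies $\langle M\rangle\subseteq B$ and $B^2\subseteq T$, forcing $B=\langle M\rangle$. Establishing this maximality, equivalently that every $d\in L$ with $(\langle M\rangle\cup\{d\})^2\subseteq T$ already lies in $\langle M\rangle$, is the genuine difficulty, and it is exactly here that the concrete description of $M$ as joins of representatives of $B_1$ and $B_2$ must be exploited, together with the convexity of $B_1,B_2$ and Proposition~\ref{prop1}(i). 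This is the technical heart of the theorem and is carried out in \cite{Cz}; the argument for $B_4$ is order-dual.

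Once $B_1\vee B_2:=B_3$ and $B_1\wedge B_2:=B_4$ are well defined, I would close by verifying the lattice identities, which are routine. Commutativity is immediate. For idempotency, $B$ itself is a block containing $\{b\vee b' : b,b'\in B\}$, so $B\vee B=B$ by uniqueness. For absorption, $B_1\vee(B_1\wedge B_2)$ contains $b_1\vee(b_1\wedge b_2)=b_1$ for every $b_1\in B_1$ (using $b_1\wedge b_2\in B_1\wedge B_2$), hence contains $B_1$ and therefore equals $B_1$ by maximality; dually $B_1\wedge(B_1\vee B_2)=B_1$. For associativity, both $(C_1\vee C_2)\vee C_3$ and $C_1\vee(C_2\vee C_3)$ contain $\{c_1\vee c_2\vee c_3 : c_i\in C_i\}$, and the same reasoning as in the uniqueness step (now applied to this set of threefold joins) shows there is a unique block containing it, so the two coincide; dually for $\wedge$. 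Hence $(L/T,\vee,\wedge)$ is a lattice.
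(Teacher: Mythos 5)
The paper does not prove this statement at all: Theorem~\ref{th2} is imported verbatim from Cz\'edli's paper \cite{Cz} (note the ``cf.'' in its header), so there is no in-paper proof to compare yours against. Judged on its own terms, your outline is a faithful reconstruction of the standard strategy, and the pieces you actually write out are correct: blocks are convex sublattices (convexity from Theorem~\ref{th1}, closure under $\vee,\wedge$ from maximality exactly as you argue), existence of $B_3,B_4$ via compatibility and Zorn, and the reduction of uniqueness to the claim that the convex sublattice $\langle M\rangle$ generated by $M=\{b_1\vee b_2\}$ is itself a block. The verifications of commutativity, idempotency and absorption from the uniqueness/maximality of blocks are also fine.

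However, as a self-contained proof the proposal has a genuine gap, and it sits exactly where you flag it: the maximality of $\langle M\rangle$ (equivalently, that all blocks containing $M$ coincide) is asserted and deferred to \cite{Cz} rather than proved. This is not a routine omission --- it is the entire content of the theorem, since an arbitrary subset $X$ with $X^2\subseteq T$ can lie in many distinct blocks, so everything hinges on exploiting the special form of $M$ as the set of joins of representatives. Note also that your associativity step quietly requires a \emph{strengthened} version of this same uniqueness, namely that the set of threefold joins $\{c_1\vee c_2\vee c_3\}$ is likewise contained in a unique block; ``the same reasoning'' plausibly applies, but it is an additional claim that would have to be checked, not a formal consequence of the two-block case. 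Since the paper itself treats Theorem~\ref{th2} as a citation, deferring to \cite{Cz} is defensible in context, but you should be explicit that the decisive lemma is being quoted, not proved.
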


Now we show that the ordering of blocks introduced above for arbitrary posets extends the lattice ordering mentioned in Theorem~\ref{th2} from lattices to posets. In the proofs of the following two theorems we frequently use Theorem~\ref{th1}.

\begin{theorem}\label{th3}
Let $\mathbf L=(L,\vee,\wedge)$ be a lattice and $T\in\Tol\mathbf L$. Then the relation $\sqsubseteq$ defined above is the partial order relation induced by the lattice $(L/T,\vee,\wedge)$.
\end{theorem}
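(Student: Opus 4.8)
The plan is to show that the relation $\sqsubseteq$ on $L/T$ coincides with the order $\leq_{L/T}$ induced by Czédli's lattice structure $(L/T,\vee,\wedge)$ from Theorem~\ref{th2}, i.e.\ that $B_1\sqsubseteq B_2$ holds if and only if $B_1\vee B_2=B_2$ (equivalently $B_1\wedge B_2=B_1$). Both directions will be proved by comparing elementwise witnesses, using that every block of $T$ is directed and convex (Theorem~\ref{th1}) and that, by Czédli's theorem, $b_1\vee b_2$ always lands in $B_1\vee B_2$ and $b_1\wedge b_2$ always lands in $B_1\wedge B_2$ for $b_1\in B_1,b_2\in B_2$; also $b_1\vee b_2,b_1\wedge b_2$ exist for all pairs since $\mathbf L$ is a lattice.

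For the easy direction, assume $B_1\vee B_2=B_2$. Given $b_1\in B_1$, pick any $b_2\in B_2$; then $b_1\vee b_2\in B_1\vee B_2=B_2$ and $b_1\leq b_1\vee b_2$, so the first half of the definition of $B_1\sqsubseteq B_2$ holds. Symmetrically, using $B_1\wedge B_2=B_1$ (which is equivalent to $B_1\vee B_2=B_2$ in the lattice $L/T$): given $b_2\in B_2$, pick any $b_1\in B_1$, then $b_1\wedge b_2\in B_1\wedge B_2=B_1$ and $b_1\wedge b_2\leq b_2$, giving the second half. Hence $B_1\sqsubseteq B_2$.

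For the converse, assume $B_1\sqsubseteq B_2$; I want $B_1\vee B_2=B_2$. Fix $b_1\in B_1$ and $b_2\in B_2$; then $b_1\vee b_2\in B_1\vee B_2=:B_3$, so it suffices to show $b_1\vee b_2\in B_2$, and then maximality of blocks forces $B_3=B_2$ (since $B_3\supseteq B_2$ always, as $B_2\vee B_2=B_2$ and... actually I will argue directly: showing $b_1\vee b_2\in B_2$ for one conveniently chosen pair together with $B_2\subseteq B_3$ yields $B_3\cap B_2$ nonempty and $B_3=B_2$ by... ) — more carefully: by $B_1\sqsubseteq B_2$ there is $b_2'\in B_2$ with $b_1\leq b_2'$; choosing $b_2:=b_2'$ gives $b_1\vee b_2'=b_2'\in B_2$, and since also $b_1\vee b_2'\in B_3$, the blocks $B_2$ and $B_3$ share the element $b_2'$; but in Czédli's lattice the join block $B_3=B_1\vee B_2$ satisfies $B_2\subseteq B_3$ is false in general — instead I use that distinct blocks of a tolerance can overlap, so I must pin down equality differently. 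The clean route: show $B_3\subseteq B_2$ and $B_2\subseteq B_3$ separately using the directedness of blocks and the two halves of $\sqsubseteq$, exploiting that for $b\in B_3$ one has $b=c_1\vee c_2$ for suitable $c_i\in B_i$ (this is a standard fact about Czédli blocks, or can be extracted from Theorem~\ref{th2} by noting every element of $B_3$ is a join of representatives); then the $\sqsubseteq$-inequalities replace each $c_1$ by an element of $B_1$ below the matching element of $B_2$, collapsing the join into $B_2$.

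The main obstacle is precisely this last step: controlling which block $b_1\vee b_2$ lies in and proving the two set inclusions $B_2\subseteq B_1\vee B_2\subseteq B_2$, since tolerance blocks overlap and sharing one element does not force equality. The key lemma to invoke or re-derive is that every element of the Czédli join block $B_1\vee B_2$ is of the form $b_1\vee b_2$ with $b_i\in B_i$ (and dually for meets); granted that, $B_1\sqsubseteq B_2$ lets us dominate every such $b_1$ by some element of $B_2$ and dominate every element of $B_2$ by some $b_1\in B_1$, and convexity/directedness of $B_2$ then shows $b_1\vee b_2\in B_2$, giving $B_1\vee B_2\subseteq B_2$; the reverse inclusion $B_2\subseteq B_1\vee B_2$ follows from $B_2=B_2\vee B_2\subseteq$ (after fixing any $b_1\in B_1$ with a suitable lower element, using the second half of $\sqsubseteq$) — after which $B_1\vee B_2=B_2$ and hence $\sqsubseteq$ is the induced order.
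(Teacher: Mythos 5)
Your first direction is fine and is essentially the paper's argument: from $B_1\vee B_2=B_2$ and $B_1\wedge B_2=B_1$ (equivalent in the lattice $L/T$), the elements $b_1\vee b_2$ and $b_1\wedge b_2$ themselves serve as the witnesses required by $\sqsubseteq$.

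The converse direction, however, has a genuine gap. You never complete the argument: after two false starts (which you rightly abandon, since overlapping blocks sharing an element need not coincide) you reduce everything to a ``key lemma'' asserting that every element of the Cz\'edli join block $B_1\vee B_2$ has the form $b_1\vee b_2$ with $b_i\in B_i$. That lemma is not contained in Theorem~\ref{th2} as stated, is not proved by you, and --- more importantly --- is not needed. Theorem~\ref{th2} says $B_1\vee B_2$ is the \emph{unique} block containing \emph{all} the joins $b_1\vee b_2$; so to prove $B_1\vee B_2=B_2$ it suffices to show $b_1\vee b_2\in B_2$ for every pair, with no need for the two set inclusions $B_2\subseteq B_1\vee B_2\subseteq B_2$ or for any description of the elements of the join block. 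You in fact have the right computation in hand at the very end: given $b_1\in B_1$ and $b_2\in B_2$, take $b_2'\in B_2$ with $b_1\leq b_2'$, use directedness of $B_2$ to find $b_2''\in B_2$ above $b_2$ and $b_2'$, observe $b_2\leq b_1\vee b_2\leq b_2''$, and conclude $b_1\vee b_2\in B_2$ by convexity; the dual argument in $B_1$ gives $b_1\wedge b_2\in B_1$, and uniqueness in Theorem~\ref{th2} then yields $B_1\vee B_2=B_2$ and $B_1\wedge B_2=B_1$. This is exactly how the paper closes the loop; as written, your proof stops short of it and leans on an unjustified structural claim.
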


\begin{proof}
Let $B_1,B_2\in L/T$ and let $\leq$ denote the partial order relation induced by the lattice $(L/T,\vee,\wedge)$. Then the following are equivalent:
\begin{enumerate}[(i)]
\item $B_1\leq B_2$,
\item $B_1\vee B_2=B_2$ and $B_1\wedge B_2=B_1$,
\item $b_1\vee b_2\in B_2$ and $b_1\wedge b_2\in B_1$ for all $b_1\in B_1$ and all $b_2\in B_2$,
\item for every $b_1\in B_1$ there exists some $b_2'\in B_2$ with $b_1\leq b_2'$, and for every $b_2\in B_2$ there exists some $b_1'\in B_1$ with $b_1'\leq b_2$,
\item $B_1\sqsubseteq B_2$.
\end{enumerate}
(i) $\Leftrightarrow$ (ii): \\
This is clear. \\
(ii) $\Leftrightarrow$ (iii): \\
This follows from Theorem~\ref{th2}. \\
(iii) $\Rightarrow$ (iv): \\
Let $b_1\in B_1$ and $b_2\in B_2$. Then $b_2,b_1\vee b_2\in B_2$. Since $B_2$ is directed there exists some $b_2'\in B_2$ with $b_2,b_1\vee b_2\leq b_2'$. Now $b_1\leq b_1\vee b_2\leq b_2'$. Analogously, we have $b_1,b_1\wedge b_2\in B_1$. Since $B_1$ is directed there exists some $b_1'\in B_1$ with $b_1'\leq b_1,b_1\wedge b_2$. Now $b_1'\leq b_2$. \\
(iv) $\Rightarrow$ (iii): \\
Let $b_1\in B_1$ and $b_2\in B_2$. Then there exists some $b_2'\in B_2$ with $b_1\leq b_2'$ and there exists some $b_1'\in B_!$ with $b_1'\leq b_2$. Since $B_2$ is directed there exists some $b_2''\in B_2$ with $b_2,b_2'\leq b_2''$. Now $b_2\leq b_1\vee b_2\leq b_2''$. Since $B_2$ is convex we obtain $b_1\vee b_2\in B_2$. Analogously, since $B_1$ is directed there exists some $b_1''\in B_1$ with $b_1''\leq b_1,b_1'$. Now $b_1''\leq b_1\wedge b_2\leq b_1$. Since $B_1$ is convex we obtain $b_1\wedge b_2\in B_1$. \\
(iv) $\Leftrightarrow$ (v):
This follows from the definition of $\sqsubseteq$.
\end{proof}

Now we can state and prove our result revealing the structure of $P/T$ for $T\in\Tol\mathbf P$.

\begin{theorem}
Let $\mathbf P=(P,\leq)$ be a poset and $T\in\Tol\mathbf P$. Then $(P/T,\sqsubseteq)$ is again a poset, called the {\em quotient poset $\mathbf P/T$ of $\mathbf P$ with respect to $T$}.
\end{theorem}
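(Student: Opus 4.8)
The plan is to verify directly that $\sqsubseteq$ is reflexive, transitive and antisymmetric on $P/T$. First I would dispose of the two trivial tolerances: if $T=\bigcup_{x\in P}\{x\}^2$ then the blocks are exactly the singletons and $\{x\}\sqsubseteq\{y\}$ reduces to $x\leq y$, so $(P/T,\sqsubseteq)$ is order-isomorphic to $\mathbf P$; if $T=P^2$ then $P/T=\{P\}$ is a one-element set. In both cases every block is a convex subset of $P$, and for a non-trivial $T$ this is precisely the content of Theorem~\ref{th1}. So from now on I may assume that every block of $T$ is convex (and directed), which is the only structural fact the argument will use.

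Reflexivity is immediate: for $B\in P/T$ and any $b_1\in B$, the element $b_1$ itself witnesses the first clause in the definition of $B\sqsubseteq B$, and likewise for the second clause. For transitivity, suppose $B_1\sqsubseteq B_2$ and $B_2\sqsubseteq B_3$. Given $b_1\in B_1$, pick $b_2\in B_2$ with $b_1\leq b_2$ and then $b_3\in B_3$ with $b_2\leq b_3$; transitivity of $\leq$ yields $b_1\leq b_3$, which is the first clause of $B_1\sqsubseteq B_3$. The second clause follows symmetrically by descending first into $B_2$ and then into $B_1$. Neither directedness nor convexity is needed for these two properties, and they hold for arbitrary $T$.

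Antisymmetry is the one place where the structure of blocks is genuinely used, and I expect it to be the main (if modest) obstacle. Assume $B_1\sqsubseteq B_2$ and $B_2\sqsubseteq B_1$, and fix $b_1\in B_1$. From $B_1\sqsubseteq B_2$ we obtain $p\in B_2$ with $b_1\leq p$; from the \emph{second} clause of $B_2\sqsubseteq B_1$, applied to $b_1\in B_1$, we obtain $q\in B_2$ with $q\leq b_1$. Then $q\leq b_1\leq p$ with $q,p\in B_2$, so convexity of $B_2$ forces $b_1\in[q,p]\subseteq B_2$; hence $B_1\subseteq B_2$. The symmetric argument, using the second clause of $B_1\sqsubseteq B_2$ and the first clause of $B_2\sqsubseteq B_1$ together with convexity of $B_1$, gives $B_2\subseteq B_1$, whence $B_1=B_2$. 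This establishes that $(P/T,\sqsubseteq)$ is a poset. Finally, by Theorem~\ref{th3} this $\sqsubseteq$ coincides in the lattice case with the order induced by Cz\'edli's quotient lattice, so the quotient poset $\mathbf P/T$ indeed extends the lattice construction as intended.
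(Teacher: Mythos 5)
Your proof is correct and follows essentially the same route as the paper: dispose of the trivial tolerances, then verify reflexivity and transitivity directly and use convexity of blocks (Theorem~\ref{th1}) for antisymmetry via the sandwich $q\leq b_1\leq p$ with $q,p\in B_2$. Your reflexivity step is in fact slightly cleaner than the paper's, which invokes directedness where $b_1\leq b_1$ already suffices.
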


\begin{proof}
If $T=\bigcup\limits_{x\in P}\{x\}^2$ and $a,b\in P$ then $\{a\}\sqsubseteq\{b\}$ if and only if $a\leq b$. Hence $(P/T,\sqsubseteq)$ is a poset in this case. If $T=P^2$ then $(P/T,\sqsubseteq)$ is the one-element poset. Hence assume $T$ to be non-trivial. We are going to show that $\sqsubseteq$ is a partial order relation on $P/T$. For this purpose let $B_1,B_2,B_3\in P/T$. \\ 
First assume $b_1,b_1'\in B_1$. Since $B_1$ is directed there exist $b_1'',b_1'''\in B_1$ with $b_1''\leq b_1,b_1'\leq b_1'''$. This shows $B_1\sqsubseteq B_1$, and $\sqsubseteq$ is reflexive. \\
Now assume $B_1\sqsubseteq B_2\sqsubseteq B_1$. Let $b_1\in B_1$ and $b_2\in B_2$. Then there exist $b_1',b_1''\in B_1$ and $b_2',b_2''\in B_2$ with $b_1\leq b_2'$, $b_1'\leq b_2$, $b_2\leq b_1''$ and $b_2''\leq b_1$. Together we obtain $b_2''\leq b_1\leq b_2'$ and $b_1'\leq b_2\leq b_1''$ which because of the convexity of $B_2$ and $B_1$ yields $b_1\in B_2$ and $b_2\in B_1$. This shows $B_1\subseteq B_2$ and $B_2\subseteq B_1$, i.e.\ $B_1=B_2$, and $\sqsubseteq$ is antisymmetric. \\
Finally, assume $B_1\sqsubseteq B_2\sqsubseteq B_3$. Let $b_1\in B_1$ and $b_3)\in B_3$. Since $B_1\sqsubseteq B_2$ there exists some $b_2\in B_2$ with $b_1\leq b_2$. Since $B_2\sqsubseteq B_3$ there exists some $b_3'\in B_3$ with $b_2\leq b_3'$. Together we obtain $b_3'\in B_3$ and $b_1\leq b_2\leq b_3'$. Since $b_3\in B_3$ and $B_2\sqsubseteq B_3$ there exists some $b_2'\in B_2$ with $b_2'\leq b_3$. Since $B_1\sqsubseteq B_2$ there exists some $b_1'\in B_1$ with $b_1'\leq b_2'$. Together we obtain $b_1'\in B_1$ and $b_1'\leq b_2'\leq b_3$, i.e.\ $B_1\sqsubseteq B_3$, and $\sqsubseteq$ is transitive.
\end{proof}

\begin{example}
Consider the poset $\mathbf P$ depicted in Figure~1 and the following tolerances on $\mathbf P$:
\begin{align*}
T_1 & =\{0,a,b,c\}^2\cup\{b,c,d,1\}^2=B_1^2\cup B_2^2, \\
T_2 & =\{0,a\}^2\cup\{b,c\}^2\cup\{d,1\}^2=C_1^2\cup C_2^2\cup C_3^2.
\end{align*}
Then the quotient posets $\mathbf P/T_i$ {\rm(}$i=1,2${\rm)} are visualized in Figure~3:

\vspace*{-2mm}

\begin{center}
\setlength{\unitlength}{7mm}
\begin{picture}(0,4)
\put(0,1){\circle*{.3}}
\put(0,3){\circle*{.3}}
\put(0,1){\line(0,1)2}
\put(-.3,.25){$B_1$}
\put(-.3,3.4){$B_2$}
\put(-.65,-.75){$\mathbf P/T_1$}
\end{picture}
\quad\quad\quad\quad\quad\quad\quad\quad
\begin{picture}(0,6)
\put(0,1){\circle*{.3}}
\put(0,3){\circle*{.3}}
\put(0,5){\circle*{.3}}
\put(0,1){\line(0,1)4}
\put(-.3,.25){$C_1$}
\put(-.3,5.4){$C_3$}
\put(.3,2.85){$C_2$}
\put(-.6,-.75){$\mathbf P/T_2$}
\put(-3.2,-1.75){{\rm Fig.~3}}
\end{picture}
\end{center}

\vspace*{8mm}

\end{example}

\begin{remark}
If $B_1$ and $B_2$ are intervals of the form $[a,b]$ and $[c,d]$, respectively, then $[a,b]\sqsubseteq[c,d]$ if and only if $a\leq c$ and $b\leq d$.
\end{remark}

It was shown in \cite{CNZ} that every tolerance on a relatively complemented lattice $\mathbf L$ is a congruence. We are going to prove an analogous result also for posets.

\begin{theorem}
Let $\mathbf P=(P,\leq)$ be a relatively complemented poset. Then $\Tol\mathbf P=\Con\mathbf P$.
\end{theorem}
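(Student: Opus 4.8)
The plan is to prove that every $T\in\Tol\mathbf P$ is transitive; since the congruences on $\mathbf P$ are exactly the transitive tolerances, this is equivalent to $\Tol\mathbf P=\Con\mathbf P$. The case $T=P^2$ being trivial, assume $T\neq P^2$, take $(x,y),(y,z)\in T$, and aim to show $(x,z)\in T$. First I would apply condition~(3): it produces $u,v\in P$ with $u\le x,y,z$, $x,y,z\le v$, and $(u,y),(y,v)\in T$. By Proposition~\ref{prop1}(i) this gives $[u,y]^2\subseteq T$ and $[y,v]^2\subseteq T$, while $x,z\in[u,v]$. Hence it suffices to prove $(u,v)\in T$: Proposition~\ref{prop1}(i) then yields $[u,v]^2\subseteq T$, which contains $(x,z)$. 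Thus everything reduces to the following: given $u\le y\le v$ with $(u,y),(y,v)\in T$, derive $(u,v)\in T$.

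Here relative complementation enters. Since $\mathbf P$ is relatively complemented, the interval $[u,v]$ is complemented, so $y$ has a complement $w$ in it; in particular $u\le w\le v$, $y\vee w=v$ and $y\wedge w=u$. I would then use the compatibility conditions three times. Condition~(1) applied to $(u,y)\in T$ and $(w,w)\in T$ — the joins $u\vee w=w$ and $y\vee w=v$ both existing — gives $(w,v)=(u\vee w,y\vee w)\in T$. Condition~(2) applied to $(y,v)\in T$ and $(w,w)\in T$ — the meets $y\wedge w=u$ and $v\wedge w=w$ both existing — gives $(u,w)=(y\wedge w,v\wedge w)\in T$. Finally, condition~(1) applied to $(u,y)\in T$ and $(u,w)\in T$ — the joins $u\vee u=u$ and $y\vee w=v$ both existing — gives $(u,v)=(u\vee u,y\vee w)\in T$, as required. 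Only conditions~(1), (2), (3) and Proposition~\ref{prop1}(i) are used.

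The step I expect to be the crux is the first invocation of the complement. Conditions~(1) and~(2) can be triggered only when the relevant joins or meets exist in $\mathbf P$, and what makes them available here is precisely that $w$ is a complement of $y$ inside $[u,v]$, which forces $y\vee w=v$ and $y\wedge w=u$; this is the sole place the hypothesis is used, and one must take care that these really are joins and meets in $\mathbf P$ and not merely suprema and infima within $[u,v]$. That the hypothesis is indispensable is visible from the examples after Theorem~\ref{th1}, where $\Con\mathbf P$ is a proper subset of $\Tol\mathbf P$ for posets that are not relatively complemented.
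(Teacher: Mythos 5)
Your argument is essentially the paper's proof: condition (3) supplies $u\le x,y,z\le v$ with $(u,y),(y,v)\in T$, a relative complement $w$ of $y$ in $[u,v]$ combined with conditions (1) and (2) yields $(u,v)\in T$, and Proposition~\ref{prop1}(i) then gives $(x,z)\in[u,v]^2\subseteq T$; the only cosmetic difference is that the paper reaches $(u,v)$ via $(w,v)=(u\vee w,y\vee w)$ and then $(u,v)=(w\wedge y,v\wedge v)$, whereas you use your second and third steps and your first derived pair $(w,v)$ is never used. The subtlety you flag --- that $y\vee w=v$ and $y\wedge w=u$ must be genuine joins and meets in $\mathbf P$ rather than merely in the subposet $([u,v],\leq)$ --- is equally present in the paper's own proof, which simply asserts that these joins and meets exist.
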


\begin{proof}
Let $a,b,c\in P$ and $T\in\Tol\mathbf P$ and assume $(a,b),(b,c)\in T$. If $T=P^2$ then $T\in\Con\mathbf P$. Hence assume $T\neq P^2$. According to (3) there exist $d,e\in P$ with $d\leq a,b,c\leq e$ and $(d,b),(b,e)\in T$. Since $\mathbf P$ is relatively complemented, there exists some complement $f$ of $b$ in $[d,e]$. Now $d\vee f$, $b\vee f$, $f\wedge b$ and $e\wedge e$ exist, thus
\begin{align*}
(d,b),(f,f)\in T & \text{ implies }(f,e)=(d\vee f,b\vee f)\in T, \\
(f,e),(b,e)\in T & \text{ implies }(d,e)=(f\wedge b,e\wedge e)\in T.
\end{align*}
Since $(d,e)\in T$, from Proposition~\ref{prop1} (i) we obtain $(a,c)\in[d,e]^2\subseteq T$, i.e.\ $T$ is transitive and hence $T\in\Con\mathbf P$.
\end{proof}

\begin{example}
The poset depicted in Figure~4 is relatively complemented, but not a lattice:

\vspace*{-2mm}

\begin{center}
\setlength{\unitlength}{7mm}
\begin{picture}(8,8)
\put(4,1){\circle*{.3}}
\put(1,3){\circle*{.3}}
\put(3,3){\circle*{.3}}
\put(5,3){\circle*{.3}}
\put(7,3){\circle*{.3}}
\put(1,5){\circle*{.3}}
\put(3,5){\circle*{.3}}
\put(5,5){\circle*{.3}}
\put(7,5){\circle*{.3}}
\put(4,7){\circle*{.3}}
\put(4,1){\line(-3,2)3}
\put(4,1){\line(-1,2)1}
\put(4,1){\line(1,2)1}
\put(4,1){\line(3,2)3}
\put(1,3){\line(0,1)2}
\put(1,3){\line(1,1)2}
\put(1,3){\line(2,1)4}
\put(3,3){\line(-1,1)2}
\put(3,3){\line(0,1)2}
\put(3,3){\line(2,1)4}
\put(5,3){\line(-2,1)4}
\put(5,3){\line(0,1)2}
\put(5,3){\line(1,1)2}
\put(7,3){\line(-2,1)4}
\put(7,3){\line(-1,1)2}
\put(7,3){\line(0,1)2}
\put(4,7){\line(-3,-2)3}
\put(4,7){\line(-1,-2)1}
\put(4,7){\line(1,-2)1}
\put(4,7){\line(3,-2)3}
\put(3.85,.25){$0$}
\put(.3,2.85){$a$}
\put(2.3,2.85){$b$}
\put(5.4,2.85){$c$}
\put(7.4,2.85){$d$}
\put(.3,4.85){$d'$}
\put(2.3,4.85){$c'$}
\put(5.4,4.85){$b'$}
\put(7.4,4.85){$a'$}
\put(3.85,7.4){$1$}
\put(3.2,-.75){{\rm Fig.~4}}
\end{picture}
\end{center}

\vspace*{2mm}

\end{example}

\begin{example}
Another example of a relatively complemented poset which is not directed is visualized in Figure~5:

\vspace*{-2mm}

\begin{center}
\setlength{\unitlength}{7mm}
\begin{picture}(2,4)
\put(0,1){\circle*{.3}}
\put(2,1){\circle*{.3}}
\put(0,3){\circle*{.3}}
\put(2,3){\circle*{.3}}
\put(0,1){\line(0,1)2}
\put(0,1){\line(1,1)2}
\put(2,1){\line(-1,1)2}
\put(2,1){\line(0,1)2}
\put(-.15,.25){$a$}
\put(1.85,.25){$b$}
\put(-.15,3.4){$c$}
\put(1.85,3.4){$d$}
\put(.2,-.75){{\rm Fig.~5}}
\end{picture}
\end{center}

\vspace*{2mm}

The list of congruences of $\mathbf P$ is as follows:
\begin{align*}
C_1 & =\{a\}^2\cup\{b\}^2\cup\{c\}^2\cup\{d\}^2, \\
C_2 & =\{a\}^2\cup\{c\}^2\cup\{b,d\}^2, \\
C_3 & =\{a\}^2\cup\{d\}^2\cup\{b,c\}^2, \\
C_4 & =\{b\}^2\cup\{c\}^2\cup\{a,d\}^2, \\
C_5 & =\{b\}^2\cup\{d\}^2\cup\{a,c\}^2, \\
C_6 & =\{a,c\}^2\cup\{b,d\}^2, \\
C_7 & =\{a,d\}^2\cup\{b,c\}^2,\\
C_8 & =\{a,b,c,d\}^2.
\end{align*}
The poset $(\Con\mathbf P,\subseteq)$ is depicted in Figure~6:

\vspace*{-2mm}

\begin{center}
\setlength{\unitlength}{7mm}
\begin{picture}(8,7)
\put(4,1){\circle*{.3}}
\put(1,3){\circle*{.3}}
\put(3,3){\circle*{.3}}
\put(5,3){\circle*{.3}}
\put(7,3){\circle*{.3}}
\put(3,5){\circle*{.3}}
\put(5,5){\circle*{.3}}
\put(4,6){\circle*{.3}}
\put(4,1){\line(-3,2)3}
\put(4,1){\line(-1,2)1}
\put(4,1){\line(1,2)1}
\put(4,1){\line(3,2)3}
\put(3,5){\line(0,-1)2}
\put(5,5){\line(0,-1)2}
\put(4,6){\line(-1,-1)3}
\put(4,6){\line(1,-1)3}
\put(3.7,.25){$C_1$}
\put(.15,2.85){$C_2$}
\put(2.15,2.85){$C_5$}
\put(5.25,2.85){$C_3$}
\put(7.25,2.85){$C_4$}
\put(2.15,4.85){$C_6$}
\put(5.25,4.85){$C_7$}
\put(3.7,6.4){$C_8$}
\put(3.2,-.75){{\rm Fig.~6}}
\end{picture}
\end{center}

\vspace*{2mm}

It is easy to see that $\Con\mathbf P=\Tol\mathbf P$.
\end{example}

J.~Grygiel and S.~Radelecki (\cite{GR}) showed that a partial order relation $\leq$ on the set $\Tol\mathbf L$ of tolerances on a lattice $\mathbf L$ can be introduced in such a way that for $S,T\in\Tol\mathbf L$ with $S\leq T$ a tolerance $T/S$ on the quotient lattice $\mathbf L/S$ can be defined such that the {\em Isomorphism Theorem for tolerances}
\[
(\mathbf L/S)/(T/S)\cong\mathbf L/T
\]
holds. For posets, we proceed as follows.

Let $\mathbf P=(P,\leq)$ be a poset and $S,T\in\Tol\mathbf P$. We say that $S\leq T$ if the following conditions hold:
\begin{itemize}
\item If $B_1\in P/S$ then there exists exactly one $B_2\in P/T$ with $B_1\subseteq B_2$,
\item every block of $T$ is a union of blocks of $S$.
\end{itemize}
Note that the first condition implies $S\subseteq T$. It is easy to see that $\leq$ is reflexive and antisymmetric.

In case $S\leq T$ for $S,T\in\Tol\mathbf P$ define a binary relation $T/S$ on $P/S$ as follows: For $B_1,B_2\in P/S$ we have $(B_1,B_2)\in T/S$ if there exists some $B_3\in P/T$ with $B_1,B_2\subseteq B_3$.

The following lemma is obvious.

\begin{lemma}\label{lem1}
Let $\mathbf P=(P,\leq)$ be a poset and $S,T\in\Tol\mathbf P$ and assume $S\leq T$. Then $T/S$ is reflexive and symmetric.
\end{lemma}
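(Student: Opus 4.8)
The plan is to read off both properties directly from the definition of $T/S$, using only the first bullet in the definition of $S\le T$. For reflexivity, fix an arbitrary block $B_1\in P/S$. By the first condition defining $S\le T$ there is a (unique) block $B_3\in P/T$ with $B_1\subseteq B_3$. Since both $B_1\subseteq B_3$ and $B_1\subseteq B_3$ hold, this $B_3$ witnesses $(B_1,B_1)\in T/S$, so $T/S$ is reflexive.

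For symmetry, suppose $(B_1,B_2)\in T/S$ for some $B_1,B_2\in P/S$. By definition there is a block $B_3\in P/T$ with $B_1\subseteq B_3$ and $B_2\subseteq B_3$. The conjunction ``$B_1\subseteq B_3$ and $B_2\subseteq B_3$'' is manifestly symmetric in $B_1$ and $B_2$, so the same $B_3$ shows $(B_2,B_1)\in T/S$. Hence $T/S$ is symmetric, which completes the argument.

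I do not expect any obstacle here: the statement merely records the trivial half of the eventual claim that $T/S$ is a tolerance on the quotient poset $\mathbf P/S$, and it uses neither the second bullet in the definition of $S\le T$ nor any of the poset-theoretic machinery (directedness, convexity, Proposition~\ref{prop1}). The genuine work — compatibility of $T/S$ with the existing meets and joins in $\mathbf P/S$, verification of conditions (3) and (4) for $T/S$, and ultimately the Isomorphism Theorem $(\mathbf P/S)/(T/S)\cong\mathbf P/T$ — is deferred to the results that follow this lemma.
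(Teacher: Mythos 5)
Your argument is correct and matches the paper's own proof: both obtain reflexivity from the first condition in the definition of $S\leq T$ (every block of $S$ lies in some block of $T$, which then witnesses $(B_1,B_1)\in T/S$) and observe that symmetry is immediate from the symmetric form of the defining condition. Nothing further is needed.
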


\begin{proof}
If $B_1\in P/S$ then there exists some $B_2\in P/T$ with $B_1\subseteq B_2$ which shows $(B_1,B_1)\in T/S$. The symmetry of $T/S$ is clear.
\end{proof}

Example~\ref{ex2} shows that the Isomorphism Theorem as it was mentioned above in the case of lattices does not hold for tolerances on posets. However, for tolerances $S$ and $T$ on a poset $\mathbf P=(P,\leq)$ such that $S\leq T$ and $T/S\in\Tol(\mathbf P/S)$ we can construct an injective mapping from $P/T$ to $(P/S)/(T/S)$, see the following theorem.

\begin{theorem}\label{th5}
Let $\mathbf P=(P,\leq)$ be a poset and $S,T\in\Tol\mathbf P$ and assume $S\leq T$. Further assume that $T/S$ satisfies {\rm(1)} -- {\rm(4)}. Then
\begin{enumerate}[{\rm(i)}]
\item $T/S\in\Tol(\mathbf P/S)$,
\item $|(P/S)/(T/S)|\geq|P/T|$.
\end{enumerate}
\end{theorem}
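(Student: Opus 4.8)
The plan is to prove the two assertions in sequence, with (i) being almost immediate from the hypotheses and (ii) requiring the construction of a well-defined injection.

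For (i), note that $T/S$ is a binary relation on $P/S$, and Lemma~\ref{lem1} already gives that $T/S$ is reflexive and symmetric. By hypothesis $T/S$ satisfies conditions (1)--(4). Since a tolerance on the poset $\mathbf P/S = (P/S, \sqsubseteq)$ is by definition exactly a reflexive, symmetric relation satisfying (1)--(4), we conclude $T/S \in \Tol(\mathbf P/S)$. This is the entire content of (i); I would state it in one or two sentences.

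For (ii), I would define a map $\Phi\colon P/T \to (P/S)/(T/S)$ as follows. Fix $B \in P/T$. By the second condition in the definition of $S \leq T$, $B$ is a union of blocks of $S$, say $B = \bigcup\{B' \in P/S : B' \subseteq B\}$; moreover for any two such blocks $B_1', B_2' \subseteq B$ we have $(B_1', B_2') \in T/S$ witnessed by $B_3 := B$, so the set $\{B' \in P/S : B' \subseteq B\}$ is a subset of $P/S$ all of whose pairs lie in $T/S$. Hence it is contained in some block $\mathcal{B}$ of $T/S$; I would set $\Phi(B) := \mathcal{B}$ if this block is uniquely determined, and otherwise pick one. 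The key point to verify is injectivity: if $\Phi(B) = \Phi(C)$ for $B, C \in P/T$, then every $S$-block contained in $B$ and every $S$-block contained in $C$ lie in a common $T/S$-block, hence are $T/S$-related, which (unwinding the definition of $T/S$) means any two such blocks $B' \subseteq B$, $C' \subseteq C$ satisfy $B', C' \subseteq B_3$ for some $B_3 \in P/T$; since $B'$ and $C'$ each lie in a unique block of $T$ by the first condition of $S \leq T$, this forces $B = B_3 = C$. Once injectivity is established, $|(P/S)/(T/S)| \geq |P/T|$ follows.

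The main obstacle I anticipate is the well-definedness of $\Phi$, i.e. confirming that the collection of $S$-blocks contained in a fixed $T$-block $B$ actually sits inside a single block of the tolerance $T/S$, and ideally inside a unique one — a block is a \emph{maximal} clique, so existence requires a maximality/Zorn-type argument while uniqueness need not hold in general. If uniqueness fails, the cleaner route is to avoid choosing $\Phi(B)$ canonically and instead argue directly at the level of cliques: the assignment $B \mapsto \{B' \in P/S : B' \subseteq B\}$ sends distinct $T$-blocks to distinct subsets of $P/S$ each of which is $T/S$-complete (by the first condition of $S\le T$ these subsets are even pairwise disjoint and nonempty), and any such complete subset extends to a block of $T/S$; a short counting/choice argument then yields the inequality $|(P/S)/(T/S)| \geq |P/T|$ without needing $\Phi$ to be canonical. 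I would present the argument in this latter form to sidestep the uniqueness subtlety.
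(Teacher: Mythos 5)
Your proposal is correct and follows essentially the same route as the paper: both arguments rest on the assignment $B\mapsto\{B'\in P/S\mid B'\subseteq B\}$ and use the uniqueness clause in the definition of $S\leq T$ to separate the images of distinct $T$-blocks. The one place you diverge is the ``main obstacle'' you anticipate: it does not actually arise. The set $f(B):=\{B'\in P/S\mid B'\subseteq B\}$ is already a \emph{maximal} clique of $T/S$, hence itself a block, so no extension and no choice is needed. Indeed, if $B_4\in P/S$ is $T/S$-related to every element of $f(B)$, pick any $B_5\in f(B)$; then $B_4,B_5\subseteq B_6$ for some $B_6\in P/T$, and since $B_5$ lies in a unique $T$-block we get $B_6=B$ and thus $B_4\subseteq B$, i.e.\ $B_4\in f(B)$ --- exactly the same uniqueness argument you deploy for injectivity. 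Your fallback (extend each clique arbitrarily to a block and check that distinct $T$-blocks still land on distinct $T/S$-blocks) is valid, since your injectivity computation is independent of the chosen extension, but it is a detour; the direct maximality check both simplifies the proof and shows that $f$ is a genuine well-defined injection $P/T\to(P/S)/(T/S)$, which is the form in which the paper states it.
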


\begin{proof}
\
\begin{enumerate}[(i)]
\item follows from Lemma~\ref{lem1}.
\item Define $f(B_1):=\{B_2\in P/S\mid B_2\subseteq B_1\}$ for all $B_1\in P/T$. Observe that because of $S\leq T$, $f(B_1)\neq\emptyset$ for all $B_1\in P/T$. We show that $f$ is an injective mapping from $P/T$ to $(P/S)/(T/S)$. Let $B_1\in P/T$. Then $(B_2,B_3)\in T/S$ for all $B_2,B_3\in f(B_1)$. Now let $B_4\in P/S$ and assume $(B_4,B_2)\in T/S$ for all $B_2\in f(B_1)$. Let $B_5\in f(B_1)$. Then there exists some $B_6\in P/T$ with $B_4,B_5\subseteq B_6$. Now $B_5\in P/S$, $B_1,B_6\in P/T$ and $B_5\subseteq B_1,B_6$. Since $S\leq T$ we conclude $B_1=B_6$. This shows $B_4\subseteq B_6=B_1$, i.e.\ $B_4\in f(B_1)$. Therefore $f(B_1)\in(P/S)/(T/S)$. Injectivity of $f$ follows from the fact that because of $S\leq T$ we have $B_1=\bigcup\limits_{B_2\in f(B_1)}B_2$ for all $B_1\in P/T$.
\end{enumerate}
\end{proof}

The following example demonstrates that in some cases not only $|(P/S)/(T/S)|\geq|P/T|$, but $|(P/S)/(T/S)|=|P/T|$.

\begin{example}\label{ex1}
Let $\mathbf P$ to be the poset visualized in Figure~2 and put
\begin{align*}
S & :=\{0,a\}^2\cup\{b,c\}^2\cup\{d\}^2=B_1^2\cup B_2^2\cup B_3^2, \\
T & :=\{0,a,b,c\}^2\cup\{d\}^2=C_1^2\cup C_2^2.
\end{align*}
Then $S,T\in\Tol\mathbf P$, $S\leq T$ and $\mathbf P/S$ and $\mathbf P/T$ look as follows:

\vspace*{-2mm}

\begin{center}
\setlength{\unitlength}{7mm}
\begin{picture}(2,3)
\put(1,1){\circle*{.3}}
\put(0,2){\circle*{.3}}
\put(2,2){\circle*{.3}}
\put(1,1){\line(-1,1)1}
\put(1,1){\line(1,1)1}
\put(.65,.25){$B_1$}
\put(-.35,2.4){$B_2$}
\put(1.65,2.4){$B_3$}
\put(.35,-.75){$\mathbf P/S$}
\put(3.4,-1.75){{\rm Fig.~7}}
\end{picture}
\quad\quad\quad\quad\quad\quad\quad\quad
\begin{picture}(2,1)
\put(0,1){\circle*{.3}}
\put(2,1){\circle*{.3}}
\put(-.35,.25){$C_1$}
\put(1.65,.25){$C_2$}
\put(.4,-.75){$\mathbf P/T$}
\end{picture}
\end{center}

\vspace*{7mm}

Further,
\[
T/S=\{B_1,B_2\}^2\cup\{B_3\}^2=D_1^2\cup D_2^2\in\Tol(\mathbf P/S),
\]
and $(\mathbf P/S)/(T/S)$ looks as follows:

\vspace*{-3mm}

\begin{center}
\setlength{\unitlength}{7mm}
\begin{picture}(2,1)
\put(0,1){\circle*{.3}}
\put(2,1){\circle*{.3}}
\put(-.35,.25){$D_1$}
\put(1.65,.25){$D_2$}
\put(-.75,-.75){$(\mathbf P/S)/(T/S)$}
\put(.25,-1.75){{\rm Fig.~8}}
\end{picture}
\end{center}

\vspace*{9mm}

This shows $\mathbf P/T\cong(\mathbf P/S)/(T/S)$.
\end{example}

If in addition to the assumptions of Theorem~\ref{th5} we assume $S,T\in\Con\mathbf P$ then we can show that there exists even a bijection from $(P/S)/(T/S)$ to $P/T$ which is, moreover, order-preserving. Unfortunately, in general this bijection is not an order-isomorphism since its inverse need not be order-preserving, see Example~\ref{ex2}.

\begin{theorem}\label{th4}
Let $\mathbf P=(P,\leq)$ be a poset and $S,T\in\Con\mathbf P$ and assume $S\leq T$ and $T/S\in\Tol(\mathbf P/S)$. Then there exists a bijective order-preserving mapping from \\
$(\mathbf P/S)/(T/S)$ onto $\mathbf P/T$.
\end{theorem}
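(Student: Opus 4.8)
The plan is to build the bijection explicitly as the map $f$ from Theorem~\ref{th5}(ii), but now read in the reverse direction: define $g\colon (P/S)/(T/S)\to P/T$ by sending a block $\mathcal B$ of $T/S$ to $\bigcup_{B\in\mathcal B}B$. First I would check this is well-defined, i.e.\ that $\bigcup_{B\in\mathcal B}B$ is actually a block of $T$. For this I would show it equals some $C\in P/T$: pick any $B_0\in\mathcal B$; since $S\le T$ there is a unique $C\in P/T$ with $B_0\subseteq C$, and because every block of $T$ is a union of blocks of $S$, $C=\bigcup\{B\in P/S\mid B\subseteq C\}$. The key point is that $\{B\in P/S\mid B\subseteq C\}$ is exactly $\mathcal B$: on one hand each such $B$ satisfies $(B,B_0)\in T/S$ (witnessed by $C$), and since $\mathcal B$ is a \emph{block} (maximal $(T/S)$-clique) containing $B_0$, showing the set $\{B\in P/S\mid B\subseteq C\}$ is a $(T/S)$-clique containing $B_0$ forces it into $\mathcal B$; conversely maximality of $\mathcal B$ together with the argument in Theorem~\ref{th5}(ii) (which used $S\le T$ to conclude $B_1=B_6$) gives the reverse inclusion. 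This simultaneously shows $g$ is the two-sided inverse of the map $f$ of Theorem~\ref{th5}(ii), hence $g$ is a bijection.

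Next I would verify $g$ is order-preserving, and this is where the congruence hypothesis $S,T\in\Con\mathbf P$ enters essentially. Suppose $\mathcal B_1\sqsubseteq\mathcal B_2$ in $(\mathbf P/S)/(T/S)$; I must show $g(\mathcal B_1)\sqsubseteq g(\mathcal B_2)$ in $\mathbf P/T$. Take $x\in g(\mathcal B_1)=\bigcup_{B\in\mathcal B_1}B$, so $x\in B$ for some $B\in\mathcal B_1$. By $\mathcal B_1\sqsubseteq\mathcal B_2$ (the definition of $\sqsubseteq$ on $P/S$ applied at the level of $\mathbf P/S$), there is a block $B'\in\mathcal B_2$ with $B\sqsubseteq B'$ in $\mathbf P/S$; hence there is $x'\in B'$ with $x\le x'$, and $x'\in g(\mathcal B_2)$. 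The symmetric condition is handled the same way. This gives $g(\mathcal B_1)\sqsubseteq g(\mathcal B_2)$ directly from the two-clause definition of $\sqsubseteq$, and in fact does not even seem to need transitivity — so I would double-check whether the congruence hypothesis is genuinely needed for \emph{this} direction or only for the surrounding claims (the theorem only asserts order-\emph{preservation}, not order-isomorphism, and the remark before the theorem flags that the inverse can fail to be monotone, which is presumably where $\Con$ would have been needed had more been claimed).

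I expect the main obstacle to be the well-definedness/bijectivity bookkeeping: carefully distinguishing the three layers — elements of $P$, blocks in $P/S$, and blocks in $(P/S)/(T/S)$ — and making sure that ``block of $T/S$'' (a maximal clique of the relation $T/S$ on the set $P/S$) really does coincide with ``the set of $S$-blocks contained in a fixed $T$-block''. The containment $\{B\in P/S: B\subseteq C\}\subseteq\mathcal B$ is the delicate one: it requires knowing that this set is a $(T/S)$-clique, which is immediate from the definition of $T/S$ with $C$ as the common superblock, combined with the maximality defining $\mathcal B$. The reverse containment reuses verbatim the computation already done in the proof of Theorem~\ref{th5}(ii). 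Once those identifications are in place, a one-line appeal to Lemma~\ref{lem1} and the fact that $T/S\in\Tol(\mathbf P/S)$ (so $(\mathbf P/S)/(T/S)$ is genuinely a poset by the quotient-poset theorem) closes the argument.
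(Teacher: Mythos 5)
Your proposal is correct and follows essentially the same route as the paper: you define the same map $\mathcal B\mapsto\bigcup_{B\in\mathcal B}B$, justify bijectivity by identifying the blocks of $T/S$ with the sets of $S$-blocks contained in a fixed $T$-block (the paper does this via the partition property of congruences, you via the uniqueness clause of $S\leq T$ plus maximality), and verify order-preservation by unwinding the two-clause definition of $\sqsubseteq$ at both levels exactly as the paper does. Your side observation is also on point: the monotonicity argument itself uses only the definition of $\sqsubseteq$, while the congruence hypothesis serves to make the block correspondence (hence bijectivity) clean.
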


\begin{proof}
Since $S,T\in\Con\mathbf P$, the blocks of $S$ as well as the blocks of $T$ form a partition of $P$ and every block of $T$ is a disjoint union of blocks of $S$. Hence two blocks of $S$ are in relation $T/S$ if and only if they are contained in the same block of $T$. It is clear that the blocks of $T/S$ are exactly the sets of the form $\{C\in P/S\mid C\subseteq B\}$ with $B\in P/T$. Define a mapping $f$ from $(P/S)/(T/S)$ to $P/T$ by $f(B):=\bigcup\limits_{C\in B}C$ for all $B\in(P/S)/(T/S)$. Since every block of $T/S$ is the set of all blocks of $S$ that are contained in a fixed block of $T$, $f$ is a bijection. We show that it is order-preserving. Let $B_1,B_2\in(P/S)/(T/S)$ and assume $B_1\sqsubseteq B_2$. Let $a\in f(B_1)$. Then there exists some $B_3\in B_1$ with $a\in B_3$. Since $B_1\sqsubseteq B_2$ there exists some $B_4\in B_2$ with $B_3\sqsubseteq B_4$. Hence there exists some $b\in B_4$ with $a\leq b$. Therefore $b\in f(B_2)$ and $a\leq b$. Conversely, let $b\in f(B_2)$. Then there exists some $B_4\in B_2$ with $b\in B_4$. Since $B_1\sqsubseteq B_2$ there exists some $B_3\in B_1$ with $B_3\sqsubseteq B_4$. Hence there exists some $a\in B_3$ with $a\leq b$. Therefore $a\in f(B_1)$ and $a\leq b$. Together, we have proved that $f(B_1)\sqsubseteq f(B_2)$, i.e.\ $f$ is order-preserving.
\end{proof}

If we apply the proof of Theorem~\ref{th4} to Example~\ref{ex1} we obtain $f(D_i)=C_i$ for $i=1,2$.

That the inverse of the bijective order-preserving mapping mentioned in Theorem~\ref{th4} need not be order-preserving is demonstrated by the following example.

\begin{example}\label{ex2}
Let $\mathbf P$ be the poset depicted in Figure~1 and put
\begin{align*}
S & :=\{0,a\}^2\cup\{b\}^2\cup\{c\}^2\cup\{d,1\}^2=B_1^2\cup B_2^2\cup B_3^2\cup B_4^2, \\
T & :=\{0,a\}^2\cup\{b,c\}^2\cup\{d,1\}^2=C_1^2\cup C_2^2\cup C_3^2.
\end{align*}
Then $S,T\in\Tol\mathbf P$, $S\leq T$ and $\mathbf P/S$ and $\mathbf P/T$ look as follows:

\vspace*{-2mm}

\begin{center}
\setlength{\unitlength}{7mm}
\begin{picture}(2,4)
\put(0,1){\circle*{.3}}
\put(2,1){\circle*{.3}}
\put(0,3){\circle*{.3}}
\put(2,3){\circle*{.3}}
\put(0,1){\line(0,1)2}
\put(0,1){\line(1,1)2}
\put(2,1){\line(-1,1)2}
\put(2,1){\line(0,1)2}
\put(-.35,.25){$B_1$}
\put(1.65,.25){$B_2$}
\put(-.35,3.4){$B_3$}
\put(1.65,3.4){$B_4$}
\put(.35,-.75){$\mathbf P/S$}
\end{picture}
\quad\quad\quad\quad\quad\quad\quad\quad
\begin{picture}(0,6)
\put(0,1){\circle*{.3}}
\put(0,3){\circle*{.3}}
\put(0,5){\circle*{.3}}
\put(0,1){\line(0,1)4}
\put(-.3,.25){$C_1$}
\put(-.3,5.4){$C_3$}
\put(.3,2.85){$C_2$}
\put(-.6,-.75){$\mathbf P/T$}
\put(-3.7,-1.75){{\rm Fig.~9}}
\end{picture}
\end{center}

\vspace*{8mm}

Further,
\[
T/S=\{B_1\}^2\cup\{B_2,B_3\}^2\cup\{B_4\}^2=D_1^2\cup D_2^2\cup D_3^2\in\Tol(\mathbf P/S),
\]
and $(\mathbf P/S)/(T/S)$ looks as follows:

\vspace*{-3mm}

\begin{center}
\setlength{\unitlength}{7mm}
\begin{picture}(2,4)
\put(0,1){\circle*{.3}}
\put(0,3){\circle*{.3}}
\put(1,2){\circle*{.3}}
\put(0,1){\line(0,1)2}
\put(-.3,.25){$D_1$}
\put(-.3,3.4){$D_3$}
\put(1.25,1.75){$D_2$}
\put(-1.75,-.75){$(\mathbf P/S)/(T/S)$}
\put(-.9,-1.75){{\rm Fig.~10}}
\end{picture}
\end{center}

\vspace*{10mm}

The mapping $f$ from the proof of Theorem~\ref{th4} maps $D_i$ onto $C_i$ for $i=1,2,3$. Since $C_1\sqsubseteq C_2$, but $f^{-1}(C_1)=D_1\not\sqsubseteq D_2=f^{-1}(C_2)$, the mapping $f^{-1}$ is not order-preserving. Even more, there does not exist a bijective order-preserving mapping from $\mathbf P/T$ to $(\mathbf P/S)/(T/S)$. Hence, the Isomorphism Theorem for tolerances on posets does not hold in general even in the case when the tolerances in question are congruences.
\end{example}

Authors' addresses:

Ivan Chajda \\
Palack\'y University Olomouc \\
Faculty of Science \\
Department of Algebra and Geometry \\
17.\ listopadu 12 \\
771 46 Olomouc \\
Czech Republic \\
ivan.chajda@upol.cz

Helmut L\"anger \\
TU Wien \\
Faculty of Mathematics and Geoinformation \\
Institute of Discrete Mathematics and Geometry \\
Wiedner Hauptstra\ss e 8-10 \\
1040 Vienna \\
Austria, and \\
Palack\'y University Olomouc \\
Faculty of Science \\
Department of Algebra and Geometry \\
17.\ listopadu 12 \\
771 46 Olomouc \\
Czech Republic \\
helmut.laenger@tuwien.ac.at
\end{document}